\documentclass[conference]{IEEEtran}
\usepackage{amsmath,amssymb,amsfonts}
\usepackage{algorithmic}
\usepackage{comment}
\usepackage{graphicx}
\usepackage{textcomp}
\usepackage{xcolor}
\usepackage{enumitem}
\usepackage{amsthm}
\usepackage{mathtools}
\usepackage{booktabs}
\usepackage[colorlinks]{hyperref}
\pdfstringdefDisableCommands{\def\eqref#1{(\ref{#1})}}
\hypersetup{colorlinks, breaklinks, citecolor=teal,filecolor=black}
\usepackage[nosort,capitalise]{cleveref}
\crefname{enumi}{}{}
\crefname{equation}{}{}

\usepackage{tikz}
\usetikzlibrary{matrix}
\DeclareMathOperator{\Tr}{Tr}

\DeclareMathOperator{\E}{\mathbf{E}}
\DeclareMathOperator{\Pro}{\mathbf{P}}

\newcommand{\ltn}{\vartheta}

\DeclareFontFamily{U}{mathx}{}
\DeclareFontShape{U}{mathx}{m}{n}{<-> mathx10}{}
\DeclareSymbolFont{mathx}{U}{mathx}{m}{n}
\DeclareMathAccent{\wh}{0}{mathx}{"70}
\DeclareMathAccent{\wc}{0}{mathx}{"71}
\DeclareMathAccent{\wt}{0}{mathx}{"72}

\ifdefined\C
\renewcommand{\C}{\mathbf{C}}
\else
\newcommand{\C}{\mathbf{C}}
\fi

\newcommand{\R}{\mathbf{R}}
\newcommand{\N}{\mathbf{N}}
\newcommand{\Z}{\mathbf{Z}}

\newcommand{\cO}{\mathcal{O}}
\newcommand{\co}{{\scriptstyle\mathcal{O}}}

\DeclarePairedDelimiter{\braket}{\langle}{\rangle}%
\DeclarePairedDelimiter{\abs}{\lvert}{\rvert}%
\DeclarePairedDelimiter{\norm}{\lVert}{\rVert}%

\DeclarePairedDelimiterXPP{\Exp}[1]{\E}[]{}{#1}
\DeclarePairedDelimiterXPP{\Prob}[1]{\Pro}(){}{#1}
\DeclarePairedDelimiterX{\tuple}[1](){#1}
\DeclarePairedDelimiterX{\set}[1]\{\}{#1}
\DeclarePairedDelimiterXPP{\landauO}[1]{\cO}(){}{#1}
\DeclarePairedDelimiterXPP{\landauo}[1]{\co}(){}{#1}
\DeclarePairedDelimiterXPP{\landauOprec}[1]{\cO_\prec}(){}{#1}

\newcommand{\e}{\varepsilon}

\newif\ifnotes
 \notestrue

\usepackage[giveninits=true,url=false,doi=false,isbn=false,eprint=true,datamodel=mrnumber,sorting=nty,maxcitenames=4,maxbibnames=99,backref=false,block=space,backend=biber,bibstyle=phys]{biblatex} 
\AtEveryBibitem{\clearfield{month}}
\AtEveryCitekey{\clearfield{month}} 
\renewbibmacro{in:}{}
\ExecuteBibliographyOptions{eprint=true}
\DeclareFieldFormat[article]{title}{\emph{#1}} 
\DeclareFieldFormat{mrnumber}{\ifhyperref{\href{http://www.ams.org/mathscinet-getitem?mr=#1}{\nolinkurl{MR#1}}}{\nolinkurl{#1}}}
\DeclareFieldFormat{pmid}{\ifhyperref{\href{https://www.ncbi.nlm.nih.gov/pubmed/#1}{\nolinkurl{PMID#1}}}{\nolinkurl{#1}}}
\DeclareFieldFormat{eprint}{\ifhyperref{\href{https://arxiv.org/abs/#1}{\nolinkurl{arXiv:#1}}}{\nolinkurl{#1}}}
\renewbibmacro*{doi+eprint+url}{%
  \iftoggle{bbx:doi}{\printfield{doi}}{}
  \newunit\newblock%
  \printfield{mrnumber}%
  \newunit\newblock%
  \printfield{pmid}%
  \newunit\newblock%
  \printfield{eprint}%
  \iftoggle{bbx:url}{\usebibmacro{url+urldate}}{}}
\addbibresource{references.bib}

\newtheorem{theorem}{Theorem}
\newtheorem{conjecture}{Conjecture}

\newtheorem{definition}{Definition}
\newtheorem{lemma}[theorem]{Lemma}

\newtheorem{remark}[theorem]{Remark}

\def\BibTeX{{\rm B\kern-.05em{\sc i\kern-.025em b}\kern-.08em
    T\kern-.1667em\lower.7ex\hbox{E}\kern-.125emX}}
\begin{document}

\title{The Lovász number of random circulant graphs
}

\author{
\IEEEauthorblockN{Afonso S. Bandeira}
\IEEEauthorblockA{\textit{ETH Zurich}}
\and
\IEEEauthorblockN{Jarosław Błasiok}
\IEEEauthorblockA{\textit{ETH Zurich}}
\and
\IEEEauthorblockN{Daniil Dmitriev}
\IEEEauthorblockA{\textit{ETH Zurich}}
\and
\IEEEauthorblockN{Ulysse Faure}
\IEEEauthorblockA{\textit{ETH Zurich}}
\and
\IEEEauthorblockN{Anastasia Kireeva}
\IEEEauthorblockA{\textit{ETH Zurich}}
\and
\IEEEauthorblockN{Dmitriy Kunisky}
\IEEEauthorblockA{\textit{Johns Hopkins University}}
}

\maketitle

\begin{abstract}
This paper addresses the behavior of the Lovász number for dense random circulant graphs.
The Lovász number is a well-known semidefinite programming upper bound on the independence number.
Circulant graphs, an example of a Cayley graph, are highly structured vertex-transitive graphs on integers modulo n, where the connectivity of pairs of vertices depends only on the difference between their labels.
While for random circulant graphs the asymptotics of fundamental quantities such as the clique and the chromatic number are well-understood, characterizing the exact behavior of the Lovász number remains open.
In this work, we provide upper and lower bounds on the expected value of the Lovász number and show that it scales as the square root of the number of vertices, up to a log log factor.
Our proof relies on a reduction of the semidefinite program formulation of the Lovász number to a linear program with random objective and constraints via diagonalization of the adjacency matrix of a circulant graph by the discrete Fourier transform (DFT).
This leads to a problem about controlling the norms of vectors with sparse Fourier coefficients, which we study using results on the restricted isometry property of subsampled DFT matrices.
\end{abstract}

\begin{IEEEkeywords}
Semidefinite programming, random graphs, restricted isometry property.
\end{IEEEkeywords}

\section{Introduction}
The Lovász number $\vartheta$ is a well-known statistic of an arbitrary simple undirected graph $G$. 
As Lovász first observed in \cite{lovasz1979shannon}, one can define a number $\ltn(G)$ 
as the value of a certain semidefinite program (SDP) 
whose constraints depend on the adjacency matrix of $G$.
The Lovász number provides an upper bound on the Shannon capacity of the graph and satisfies the following inequalities:
\begin{equation}
\label{eq:theta_ineq}
\omega(G) \leq \ltn(\overline{G}) \leq \chi(G),
\end{equation}
where $\omega(G)$ is the size of the largest clique in $G$, $\chi(G)$ is the chromatic number of $G$, and $\overline{G}$ is the complement of $G$. 
This observation is remarkable, since $\vartheta$ is computable in polynomial time, while $\omega$ and $\chi$ are famously NP-hard to compute. 

The Lovász number has been studied for a variety of random graph models including the classical Erd\H{o}s-R\'{e}nyi (ER) random graph $G(n, p)$. 
Its expected value was first studied by Juh{\'a}sz~\cite{juhasz1982asymptotic}, 
who showed that $\E \ltn(G) = \Theta(\sqrt{n/p})$ for $\frac{\log^6 n}{n} \le p \le 1/2$. 
For $p=1/2$, Arora and Bhaskara~\cite{aroranote} showed that \(\ltn(G)\) concentrates around its median in an interval of polylogarithmic length.
In the sparse regime $p < n^{-1/2}$, it has been further shown that \(\ltn(G)\) concentrates around its median in an interval of constant length~\cite{coja2005lovasz}.
To the best of our knowledge, determining the correct constant in the $\Theta(\sqrt{n/p})$ asymptotic remains an open question. 

\begin{figure}[ht]
\centering
\begin{tabular}{cc}
\begin{tikzpicture}[scale=1.2]
  \foreach \i in {0,...,8} {
    \node[draw, circle, inner sep=2pt] (v\i) at ({360/9 * \i}:1.5cm) {};
  }
  \foreach \i in {0,...,8} {
    \pgfmathtruncatemacro{\nextTwo}{mod(\i+2,9)}
    \draw[blue, thick] (v\i) -- (v\nextTwo);
    \pgfmathtruncatemacro{\nextThree}{mod(\i+3,9)}
    \draw[red, thick] (v\i) -- (v\nextThree);
  }
\end{tikzpicture}
&

\begin{tikzpicture}[every node/.style={minimum size=0.3cm, anchor=center, font=\footnotesize}, scale=0.6]
\matrix (m3) [matrix of nodes,
    left delimiter={(},
    right delimiter={)},
    row sep=-\pgflinewidth, column sep=-\pgflinewidth
]{
  $\cdot$ & $\cdot$ & $\textcolor{blue}{\mathbf{1}}$ & $\textcolor{red}{\mathbf{1}}$ & $\cdot$ & $\cdot$ & $\textcolor{red}{\mathbf{1}}$ & $\textcolor{blue}{\mathbf{1}}$ & $\cdot$ \\
  $\cdot$ & $\cdot$ & $\cdot$ & $\textcolor{blue}{\mathbf{1}}$ & $\textcolor{red}{\mathbf{1}}$ & $\cdot$ & $\cdot$ & $\textcolor{red}{\mathbf{1}}$ & $\textcolor{blue}{\mathbf{1}}$ \\
  $\textcolor{blue}{\mathbf{1}}$ & $\cdot$ & $\cdot$ & $\cdot$ & $\textcolor{blue}{\mathbf{1}}$ & $\textcolor{red}{\mathbf{1}}$ & $\cdot$ & $\cdot$ & $\textcolor{red}{\mathbf{1}}$ \\
  $\textcolor{red}{\mathbf{1}}$ & $\textcolor{blue}{\mathbf{1}}$ & $\cdot$ & $\cdot$ & $\cdot$ & $\textcolor{blue}{\mathbf{1}}$ & $\textcolor{red}{\mathbf{1}}$ & $\cdot$ & $\cdot$ \\
  $\cdot$ & $\textcolor{red}{\mathbf{1}}$ & $\textcolor{blue}{\mathbf{1}}$ & $\cdot$ & $\cdot$ & $\cdot$ & $\textcolor{blue}{\mathbf{1}}$ & $\textcolor{red}{\mathbf{1}}$ & $\cdot$ \\
  $\cdot$ & $\cdot$ & $\textcolor{red}{\mathbf{1}}$ & $\textcolor{blue}{\mathbf{1}}$ & $\cdot$ & $\cdot$ & $\cdot$ & $\textcolor{blue}{\mathbf{1}}$ & $\textcolor{red}{\mathbf{1}}$ \\
  $\textcolor{red}{\mathbf{1}}$ & $\cdot$ & $\cdot$ & $\textcolor{red}{\mathbf{1}}$ & $\textcolor{blue}{\mathbf{1}}$ & $\cdot$ & $\cdot$ & $\cdot$ & $\textcolor{blue}{\mathbf{1}}$ \\
  $\textcolor{blue}{\mathbf{1}}$ & $\textcolor{red}{\mathbf{1}}$ & $\cdot$ & $\cdot$ & $\textcolor{red}{\mathbf{1}}$ & $\textcolor{blue}{\mathbf{1}}$ & $\cdot$ & $\cdot$ & $\cdot$ \\
  $\cdot$ & $\textcolor{blue}{\mathbf{1}}$ & $\textcolor{red}{\mathbf{1}}$ & $\cdot$ & $\cdot$ & $\textcolor{red}{\mathbf{1}}$ & $\textcolor{blue}{\mathbf{1}}$ & $\cdot$ & $\cdot$ \\
};
\end{tikzpicture}
\\[0.0em]
\end{tabular}
\caption{Circulant graph on \(9\) vertices and its adjacency matrix (0's replaced by dots). Each vertex \(i\)  is connected to vertices \(\textcolor{blue}{i + 2, i+7}, \textcolor{red}{i + 3}\), and \(\textcolor{red}{i + 6} \mod 9\).}
\label{fig:circ_graphs}
\end{figure}
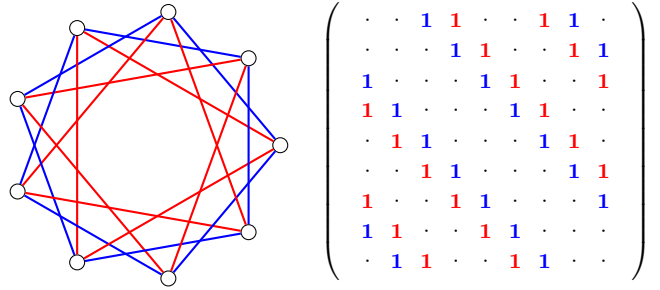
In this work, we focus on a class of random \emph{circulant} graphs (RCGs), a family of vertex-transitive graphs with a circulant adjacency matrix; see~\Cref{fig:circ_graphs} and~\Cref{def:circ_graph,def:rand_circ}. 
We emphasize that RCGs are fully determined by the connectivity of any given single vertex. Therefore, a dense RCG can be generated with \(\frac{n - 1}{2}\) random bits, where each bit affects the presence of \(n\) edges, in contrast to the \(\frac{n(n - 1)}{2}\) random bits in $G(n,1/2)$, each affecting just one edge.
In this sense, RCGs may be viewed as a ``partial derandomization'' of ER graphs.
Indeed, circulant graphs are precisely Cayley graphs on the group \(\mathbb{Z}_n\), and general random Cayley graphs have long been studied for similar purposes in theoretical computer science.

It is therefore of interest to understand to what extent the above results for ER graphs also apply to RCGs.
For dense RCGs, the asymptotics of the clique number and the chromatic number are well-understood:~\cite{green2005counting} showed a high-probability upper bound on the clique number \(\omega(G) = O(\log n)\), and later~\cite{green2016counting} proved that the chromatic number is at most $(1+o(1)) \frac{n} { 2 \log_2 n}$ with high probability. 
These results imply bounds on the Lovász number through~\Cref{eq:theta_ineq}, but the resulting upper and lower bounds are far apart.

Our main result is much sharper upper and lower bounds on the expected Lovász number of a dense RCG. 
\begin{theorem}
\label{thm:main}
    There exists a constant $C > 0$ such that, for a dense random circulant graph \(G\) on $n$ vertices (\Cref{def:rand_circ}),
    \begin{equation}
        \sqrt{n} \leq \E \ltn(G) \leq C \sqrt{n \log \log n}.
    \end{equation}
\end{theorem}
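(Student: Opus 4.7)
My plan is to combine Lovász's product inequality $\vartheta(G)\vartheta(\overline{G}) \geq n$ with the distributional self-complementarity of the dense RCG model: the connection set $S$ and its complement $(\mathbb{Z}_n\setminus\{0\})\setminus S$ are both uniform symmetric subsets, so $G$ and $\overline G$ have the same law. Combining with AM--GM and linearity of expectation then gives
\[
2\,\E\vartheta(G) \;=\; \E[\vartheta(G) + \vartheta(\overline G)] \;\geq\; \E\bigl[2\sqrt{\vartheta(G)\vartheta(\overline G)}\bigr] \;\geq\; 2\sqrt n.
\]

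\textbf{Upper bound: LP reduction and baseline.} The upper bound proceeds in three steps: (i) reduce the Lovász SDP to an LP via DFT diagonalization of circulants; (ii) construct a primal feasible solution of value $O(\sqrt{n\log\log n})$ with high probability; (iii) pass to expectation using the trivial bound $\vartheta(G)\le n$ on the exceptional event. For (i), vertex-transitivity lets me restrict the SDP variable to circulants $M = J + \operatorname{circ}(x)$, with $J$ the all-ones matrix and $x\in\mathbb{R}^n$ real, symmetric ($x_\ell=x_{-\ell}$), and supported on $S$. Since circulants are simultaneously diagonalized by the DFT,
\[
\vartheta(G) \;=\; \min_{x:\,\supp x \subseteq S}\; \max_{k\in\mathbb{Z}_n}\bigl(\widehat x(k) + n\,\mathbf{1}[k=0]\bigr), \qquad \widehat x(k) := \sum_{\ell\in S} x_\ell\,\omega^{k\ell}.
\]
The baseline ansatz $x^{(0)} := -2\mathbf{1}_S$ gives value $O(\sqrt{n\log n})$: indeed $\widehat{x^{(0)}}(0) + n = n - 2|S| = O(\sqrt n)$ w.h.p., while $\max_{k\neq 0}\widehat{x^{(0)}}(k) = O(\sqrt{n\log n})$ by sub-Gaussian concentration of each $\widehat{\mathbf{1}_S}(k)$ (variance $\Theta(n)$) combined with a union bound over $n$ frequencies. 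This loses a factor $\sqrt{\log n/\log\log n}$ versus the target.

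\textbf{Sparse correction via RIP.} To close the gap, set $\alpha := C\sqrt{n\log\log n}$ and add a correction $w$ supported on $S$. I would define the random "outlier" set $K\subseteq \mathbb{Z}_n\setminus\{0\}$ where $-2\widehat{\mathbf{1}_S}$ exceeds $\alpha$ --- possibly refined into dyadic scales $K_j := \{k : 2^{j-1}\alpha/2 < |\widehat{\mathbf{1}_S}(k)| \le 2^j\alpha/2\}$ --- and construct $w$ so that $\widehat w$ absorbs the excess on $K$ (or each $K_j$) while staying below the slack $\alpha + 2\widehat{\mathbf{1}_S}(k)$ elsewhere. Existence and $\ell^\infty$ control of such a $w$ reduce to spectral properties of the $|K|\times|S|$ DFT submatrix $F^{(S)}_K$ (rows in $K$, columns in $S$): its Gram $F^{(S)}_K(F^{(S)}_K)^*$ has diagonal $|S|\asymp n$ and off-diagonal entries $\widehat{\mathbf{1}_S}(k-k')$ of size $O(\sqrt{n\log n})$, so it is well-conditioned w.h.p., and a restricted isometry-type bound then controls how the correction propagates to non-outlier frequencies. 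The desired $w$ can then be taken as the minimum-$\ell^2$-norm solution to the interpolation system $F^{(S)}_K(w|_S) = b$ for an appropriate $b \in \mathbb{R}^K$.

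\textbf{Main obstacle.} The technical heart of the argument is the $\ell^\infty$ analysis of $\widehat w$ over $\mathbb Z_n\setminus K$: naive pointwise estimates for $\widehat w(k') \propto \sum_{k\in K}\widehat{\mathbf{1}_S}(k'-k)\,b_k$ lose a polynomial-in-$n$ factor, so one must exploit cancellation in these random Fourier sums via the restricted isometry property of the subsampled DFT. I expect a multi-scale (dyadic) decomposition of outlier frequencies combined with a chaining-type analysis --- applying RIP at each scale $K_j$ --- to be required in order to attain the $\sqrt{\log\log n}$ factor. An added subtlety is that $K$ depends on $S$, so either a uniform RIP bound over all row-sets of the appropriate size must be invoked, or a decoupling argument must be set up. The $\sqrt{\log\log n}$ factor in the theorem emerges precisely from this analysis, reflecting sub-Gaussian concentration of $\max_k|\widehat{\mathbf{1}_S}(k)|$ over a small, $o(n)$-sized set rather than over all of $\mathbb Z_n$.
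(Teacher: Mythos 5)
Your lower bound is correct and essentially the paper's argument: both rest on $\vartheta(G)\vartheta(\overline G)=n$ for vertex-transitive graphs plus the distributional self-complementarity of the model (the paper uses Jensen on $\log\vartheta$ where you use AM--GM; either works). Your reduction to a circulant eigenvalue program is also sound, and your baseline ansatz $x^{(0)}=-2\mathbf{1}_S$ correctly reproduces the $O(\sqrt{n\log n})$ bound --- indeed $\widehat{x^{(0)}}(k)+n\mathbf{1}[k=0]$ is exactly the vector $g=Fb$ that the paper works with, so up to this point you are computing the same objects from the dual side.

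The genuine gap is the step from $\sqrt{n\log n}$ to $\sqrt{n\log\log n}$. You correctly identify that one must handle the $o(n)$-sized set $K$ of outlier frequencies, but your plan --- construct a correction $w$ supported on $S$ whose Fourier transform cancels the excess on $K$ while staying controlled in $\ell^\infty$ off $K$ --- is a dual-certificate construction whose hardest step you explicitly leave open (``I expect a multi-scale decomposition \dots to be required''). Concretely, the interpolation system $F^{(S)}_K(w|_S)=b$ is not obviously well-posed by your stated estimates: the Gram matrix has diagonal $\asymp n$ and off-diagonals $O(\sqrt{n\log n})$, but $|K|$ is of order $n/\mathrm{polylog}(n)\gg\sqrt{n}$, so diagonal dominance fails and well-conditioning needs a genuine RIP-type input; moreover $K$ depends on $S$, and the $\ell^\infty$ propagation of the correction to $\mathbb{Z}_n\setminus K$ is exactly the kind of estimate that is delicate in this literature. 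None of this is carried out, so the proposal does not yet prove the upper bound. The paper sidesteps the certificate construction entirely by working on the primal side: a feasible $y$ (normalized to $\|y\|_1=1$) lies in the kernel of the subsampled DFT matrix $\wt F$, so RIP of subsampled Fourier matrices gives the flatness bound $\|y\|_2\le \frac{\log^2 n}{\sqrt n}\|y\|_1$ for \emph{all} such $y$ simultaneously; splitting $g=g_{\mathrm{small}}+g_{\mathrm{large}}$ at threshold $C\sqrt{n\log\log n}$, the small part is handled by $\|y\|_1\|g_{\mathrm{small}}\|_\infty$ and the large part --- which is $n/\log^{10}n$-sparse with entries $O(\sqrt{n\log n})$, hence has $\ell^2$ norm $O(n/\log^4 n)$ --- contributes only $\|y\|_2\|g_{\mathrm{large}}\|_2=O(\sqrt n)$ by Cauchy--Schwarz. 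This replaces your unresolved $\ell^\infty$ chaining analysis with a single off-the-shelf kernel estimate. A secondary, fixable issue: on the exceptional event where the outlier set is atypically large (probability only inverse-polylogarithmic, not $O(1/n)$), the trivial bound $\vartheta\le n$ is too weak to absorb into $O(\sqrt{n\log\log n})$ in expectation; you need to fall back on the $O(\sqrt{n\log n})$ baseline there.
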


\noindent
\emph{Proof Strategy:}
Our proof of the upper bound in~\Cref{thm:main} relies on the algebraic structure of circulant graphs. First, following~\cite{magsino2019linear}, we transform the SDP formulation of \(\ltn(G)\) to a linear program (LP) using the fact that the circulant matrices are diagonalizable by a discrete Fourier transform (DFT)
\Cref{lem:lp_with_g} gives the resulting LP:
\begin{equation}
\label{eq:first_step}
\begin{aligned}
    \ltn(G) &= \max_{(y_0, \dots, y_{n-1}) \in \R^{n}} \braket{y, g}, \\ \text{ subject to }
    &\begin{cases} y_k = y_{n - k} \text{ for } k = 1,\ldots, n - 1, \\
    \norm{y}_1 = 1,
    y \geq \mathbf{0}, \\
    \braket{y, f_k} = 0\text{ for all edges } (0, k).
    \end{cases}
\end{aligned}
\end{equation}
Here, \(f_k\) is the \(k\)-th row of the DFT matrix \(F\), and \(g \coloneqq Fb\) for \(b \in \{\pm 1\}^{n}\) with $b_0=1$ and \(b_k = 1\) if \((0, k)\) is not an edge, and \(-1\) otherwise, for $1 \le k \le n-1$. We denote \(\mathbf{0} \coloneqq (0, \ldots, 0)\) and \(y \geq \mathbf{0}\) stands for entrywise positivity of \(y\).

The last constraint in~\Cref{eq:first_step} requires the Fourier transform of \(y\) to have a specific sparsity pattern. Uncertainty principles for the Fourier transform (see, e.g.,~\cite{bandeira2018discrete}) then suggest that all feasible vectors \(y\) must be dense \cite{demanet2014scaling}.
A quantitative version of this ``density'' would be enough to bound the LP.
To illustrate, suppose that $y$ is a feasible vector with $\|y\|_1 = 1$ and its mass is spread almost uniformly among its coordinates, i.e., that
\(\norm{y}_2 \leq \frac{c}{\sqrt{n}} \norm{y}_1 = \frac{c}{\sqrt{n}}\), for some constant \(c > 0\). Since \(\norm{g}_2 = n\), Cauchy-Schwarz inequality would give \(\braket{y, g} \leq \norm{y}_2 \norm{g}_2 \leq c\sqrt{n},\) proving upper bound in~\Cref{thm:main} without the extra \(\sqrt{\log \log n}\) factor.

The second part of our proof,~\Cref{lem:rip}, makes the aforementioned intuition rigorous, relying on the \textit{restricted isometry property} (RIP,~\Cref{def:rip}).
The $f_k$ in our constraints form a so-called \emph{subsampled DFT basis}, which is a random subset of the Fourier basis.
The RIP for such bases is in fact a celebrated topic in the compressed sensing literature. RIP was first introduced and studied for subsampled DFT bases in seminal work of Candès and Tao~\cite{candes2006near},
and since then, one of the central 
questions for compressed sensing is 
the number of $f_k$ needed for RIP to hold. 
\Cref{lem:rip_dft} describes a simplified version of the current best bound due to~\cite{haviv2017restricted} which is sufficient for our purposes.
Interestingly, our upper bound proof only uses the fact that feasible solutions of~\Cref{eq:first_step} lie on a (random) nullspace of a subsampled DFT matrix, and omits the positivity constraint \(y \geq \mathbf{0}\). However, as we discuss in~\Cref{sec:discussion}, we believe that this constraint is important for tighter results.

\section{Preliminaries}
\paragraph*{Notation}
For \(n \in \N\), let \([n] \coloneqq \{0, \ldots, n - 1\}\). We index vectors and matrices by \([n]\): for \(x 
\in \R^n, x = (x_0, \ldots, x_{n-1})\). We write \(x \geq \mathbf{0}\) for entrywise positivity.
For \(n \in \N\), we denote by \(G = (V, E)\) a graph with vertex set \(V = [n]\) 
and edge set \(E \subseteq (V \times V) \setminus \{(k, k) \text{ for } k \in V\}\).
For a graph \(G = (V, E)\) we define its complement \(\overline{G} = (V, E')\),
where \(E' = \{(u, v) \text{ s.t. } u \neq v \text{ and } (u, v) \notin E\}\).
We use the standard asymptotic notation, $O(\cdot), \Omega(\cdot)$, and $\Theta(\cdot)$ to describe the order of the growth of functions associated with the limit of the graph dimension $n$. For \(x \in \R^n\), we denote \(\norm{x}_1 \coloneqq \sum_{k=0}^{n - 1} \abs{x_k}\), \(\norm{x}_2 \coloneqq \left(\sum_{k=0}^{n-1} x_k^2\right)^{1/2}\), and \(\norm{x}_{\infty} \coloneqq \max_{k}\abs{x_k}\).

\paragraph*{Discrete Fourier Transform}
Let \(F \in \C^{n \times n}\) be the discrete Fourier transform matrix: \(F_{jk} = \exp(-2 \pi i jk/n)\) for \(j, k \in [n]\). For \(k \in [n]\), let \(f_k\) denote the \(k\)-th row of \(F\).
We associate a matrix \(\wt F \in \R^{m \times n}\) to any RCG $G$ consisting of subsampled rows of \(F\).
\begin{definition}
\label{def:f_tilde}
For any RCG $G$, let $\wt F \equiv \wt F (G) \in \C^{m \times n}$ (with $m$ the number of neighbors of $0$ in $G$)  be defined as a submatrix of \(F\), including row \(f_k\) if $(0,k) \in E(G).$
\end{definition}

\begin{definition}
 The \emph{Lovász theta number} \(\ltn(G)\) is defined as the solution to the following SDP (\(J\) is the all-ones matrix),
\begin{equation}
\label{def:ltn}
\begin{aligned}
    \ltn(G) \coloneqq \max_{X \in \R^{n \times n}}
    \Big\{&\langle X, J \rangle \text{, such that } X \succeq 0, \Tr X = 1, \\
    &X_{ij} = 0\text{ for all }(i, j) \in E(G)\Big\}.
\end{aligned}
\end{equation}
\end{definition}
\begin{definition}
\label{def:circ_graph}
 A graph on $n$ vertices is called \emph{circulant} if there is an ordering of its vertices such that its adjacency matrix is circulant. Equivalently, a circulant graph is a Cayley graph of a cyclic group \(\Z_n\).
\end{definition}

This definition implies that a circulant graph is described by listing the neighbors of a single root vertex (say vertex $0$), since $(i,j)\in E \iff (0,i-j)\in E.$
In this text, we focus on \emph{dense random} circulant graphs.
    \begin{definition}
    \label{def:rand_circ}
    For odd \(n\), a \emph{dense random circulant graph (RCG)} is a random Cayley graph of a cyclic group \(\Z_n\). It is obtained in the following way: uniformly sample \(x \in \{0, 1\}^m, m={\frac{n - 1}{2}},\) and define the first row of the adjacency matrix as
    \begin{equation}
        R = (0\ \ x\ \ \overset{\leftarrow}{x}),
    \end{equation}
    where \(\overset{\leftarrow}{x}_i \coloneqq x_{m-i-1}\). Circulate $R$ to obtain the complete adjacency matrix.
    \end{definition}

For a circulant graph \(G\) we define a vector \(g \coloneqq Fb\), where \(b \in \{\pm 1\}^{n}\) with $b_0=1$ and \(b_k = 1\) if \((0, k)\) is not an edge, and \(-1\) otherwise, for $1 \le k \le n-1$.

\begin{definition}[Restricted isometry property]
\label{def:rip}
    A matrix \(A \in \C^{q \times n}\) is said to satisfy \((k, \varepsilon)\)-restricted isometry property, for \(k \leq n\) and \(\varepsilon \in (0, 1)\), if for all \(k\)-sparse \(x \in \C^{n}\) we have that 
    \begin{equation}
        (1 - \varepsilon) \norm{x}_2^2 \leq \norm{Ax}_2^2 \leq (1 + \varepsilon) \norm{x}_2^2. 
    \end{equation}
\end{definition}

\section{Proof of main theorem}

Let $G$ be a circulant graph. As noted in~\cite{magsino2019linear}, for circulant graphs the SDP formulation of the Lovász number can be rewritten as the following linear program:

\begin{equation}
\begin{aligned}
    \ltn(G) &= \max_{x \in \R^{n}} \sum_{i \in [n]} x_i, \\ \text{ subject to }
    &\begin{cases} x_k = x_{n - k} \text{ for all } k \in [n] \setminus \{0\}, \\
    x_0 = 1,
    Fx \geq \mathbf{0}, \\
    x_k = 0\text{ for all edges } (0, k),
    \end{cases}
\end{aligned}
\end{equation}

\begin{table}[ht]
\centering
\caption{Four equivalent LPs for \(\ltn(G)\).}
\label{table:4_lps}
\renewcommand{\arraystretch}{1.05}
\begin{tabular}{@{}p{0.47\linewidth} p{0.47\linewidth}@{}}
\multicolumn{2}{c}{\textbf{'time' domain}} \\[3pt]
\toprule
\textbf{Primal} & \textbf{Dual}\\
\midrule
\(
\begin{aligned}
\max_{x \in \R^n} \ &\sum_{i} x_i\\
\text{s.t. } &x_k = x_{n - k} \\
&\quad \text{for all } k \in [n] \setminus \{0\}, \\
&x_0 = 1,\ Fx \ge \mathbf{0} ,\\
&x_k=0 \\
&\quad \text{for all }(0, k) \in E(G).
\end{aligned}
\)
&
\(
\begin{aligned}
\min_{z \in \R^n} &1 + \sum_i z_i \\
\text{s.t. } &z_k = z_{n - k} \\
&\quad \text{for all } k \in [n] \setminus \{0\}, \\
&z \geq \mathbf{0}, \\
&\langle z, f_k \rangle = -1 \\
&\quad \text{for all } (0, k) \in E(\overline{G}).
\end{aligned}
\)
\\
\bottomrule
\\[-0.2em]
\multicolumn{2}{c}{\textbf{'frequency' domain}} \\[3pt]
\toprule
\textbf{Primal} & \textbf{Dual}\\
\midrule
\(
\begin{aligned}
\max_{y \in \R^n}\ &n y_0\\
\text{s.t. } &y_k = y_{n - k} \\
&\quad \text{for all } k \in [n] \setminus \{0\},  \\
&\norm{y}_1 = 1,\ y \geq \mathbf{0},\\
&\langle y, f_k \rangle = 0 \\
&\quad \text{for all } (0, k) \in E(G).
\end{aligned}
\)
&
\(
\begin{aligned}
\min_{t \in \R^n} &1 + n t_0\\
\text{s.t. } &t_k = t_{n - k} \\
&\quad \text{for all } k \in [n] \setminus \{0\},\\
&Ft \geq \mathbf{0}, \\
&t_k = -1 / n \\
&\quad \text{for all } (0, k) \in E(\overline{G}).
\end{aligned}
\)
\\
\bottomrule
\end{tabular}
\end{table}

\Cref{table:4_lps} shows four equivalent linear programs, arising from strong duality (see, e.g.,~\cite{bazaraa2011linear}) and switching between 'time' and 'frequency' domains. For the latter, we perform the change of variables, \(y \coloneqq Fx\) and \(t \coloneqq Fz\) respectively.

All formulations share the same structure: the optimization objective is determinisitic,
while the set of feasible solutions is random through the random circulant graph structure.
The following proposition introduces randomness to the objective, which is a crucial part of our argument.
\begin{lemma}
\label{lem:lp_with_g}
Let \(G\) be a dense RCG and \(\wt F\) be a subsampled DFT matrix, see~\Cref{def:f_tilde}. Let \(g \coloneqq Fb \in \R^n\), for \(b \in \{\pm 1\}^{n}\) with \(b_k = 1\) if \((0, k)\) is not an edge and \(-1\) otherwise. Then,
    \begin{equation}
\begin{aligned}
    \ltn(G) &= \max_{y \in \R^{n}} \braket{y, g}, \\ \text{ subject to }
    &\begin{cases} y_k = y_{n - k} \text{ for all } k \in [n] \setminus \{0\}, \\
    \norm{y}_1 = 1,
    y \geq \mathbf{0}, \\
    y \in \ker \wt F,
    \end{cases}
\end{aligned}
\end{equation}
\end{lemma}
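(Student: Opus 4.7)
The plan is to start from the frequency-domain primal LP already displayed in \Cref{table:4_lps}, which maximizes $n y_0$ subject to the symmetry $y_k = y_{n-k}$, the normalization $\norm{y}_1 = 1$, the positivity $y \geq \mathbf{0}$, and the edge constraints $\braket{y, f_k} = 0$ for every $(0, k) \in E(G)$. From there the proof has two purely algebraic tasks: rewrite the collection of edge constraints as the single kernel condition $y \in \ker \wt F$, and replace the deterministic objective $n y_0$ by the random linear functional $\braket{y, g}$.

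The first task is immediate from \Cref{def:f_tilde}: the rows of $\wt F$ are exactly the $f_k$ indexed by the neighbors of vertex $0$, and $\braket{y, f_k}$ equals the $k$-th entry of $Fy$, so the system $\braket{y, f_k} = 0$ for every edge $(0, k)$ is precisely the condition $\wt F y = 0$.

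The second task reduces to the identity $\braket{y, g} = n y_0$ for every $y$ in $\ker \wt F$. I would compute
\[
\braket{y, g} = \braket{y, Fb} = \sum_k b_k (Fy)_k
\]
by reordering the double sum (using that $F$ is symmetric). Splitting by the sign of $b_k$: whenever $b_k = -1$ the index $k$ is a neighbor of $0$ and $(Fy)_k = 0$ by the kernel constraint, so that term vanishes; the surviving terms therefore coincide with $\sum_k (Fy)_k$, which equals $n y_0$ by orthogonality of the Fourier characters $\sum_k e^{-2\pi \ii jk/n} = n\, \mathbf{1}[j=0]$. Substituting this identity into the objective leaves the feasible set unchanged and converts $\max n y_0$ into $\max \braket{y, g}$.

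There is no substantial obstacle here; the argument is a short algebraic manipulation relying only on the symmetry of $F$ and the orthogonality of its rows. Its importance is conceptual rather than technical: it transfers all of the graph randomness from the feasible set into the single vector $g = Fb$ defining the objective, which is precisely the decomposition needed for the subsequent RIP-based density argument.
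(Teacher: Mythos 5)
Your proposal is correct and matches the paper's own proof: both start from the frequency-domain primal LP, identify the edge constraints with $y \in \ker \wt F$, and use that kernel condition to show $\braket{y,g} = n y_0$ via the identity $\braket{y, \sum_k f_k} = n y_0$. The only cosmetic difference is that you run the computation from $\braket{y,g}$ toward $n y_0$ while the paper goes the other way (subtracting $2\sum_{(0,k)\in E}\braket{y,f_k} = 0$ from $\braket{y,\sum_k f_k}$).
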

\begin{proof}
We use the primal formulation in the frequency domain and observe that \(n y_0 = \braket{y, \sum_{k \in [n]} f_k}\).
Since feasible vectors \(y\) are orthogonal to \(\wt F\), i.e., \(y \in \ker \wt F\), after subtracting \(2 \sum_{(0, k) \in E(G)} \braket{y, f_k}\) from \(\braket{y, \sum_{k \in [n]} f_k}\) we obtain
\begin{equation}
\langle y, \sum_{k \in [n]} f_k \rangle = \langle y, \sum_{(0, k) \notin E(G)} f_k - \sum_{(0, k) \in E(G)} f_k \rangle = \langle y, g\rangle.
\end{equation}
\end{proof}

By the definition of graph \(G\), \(b_0 = 1\), and \(b_1, b_2, \ldots, b_{\frac{n - 1}{2}} \overset{\mathrm{iid}}{\sim} \mathrm{Unif}\{-1, 1\}\).
Since \(\max_{jk} \abs{F_{jk}} = 1\), we can bound \(\|g\|_{\infty}\), leading to the following upper bound on \(\ltn\).
\begin{lemma}
\label{lem:nlogn_ub}
Let \(G\) be a dense RCG. Then,
\begin{equation}
\Pro(\ltn(G) \leq 1 + 4\sqrt{n \log n}) \geq 1 - \frac{2}{n}.
\end{equation}
\end{lemma}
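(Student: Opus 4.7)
The plan is to combine \Cref{lem:lp_with_g} with an elementary Hoeffding argument for $\|g\|_\infty$. First, I would observe that every feasible $y$ in \Cref{lem:lp_with_g} satisfies $\|y\|_1 = 1$, so Hölder's inequality gives
\[
\ltn(G) \;=\; \max_{y \text{ feasible}} \langle y, g\rangle \;\leq\; \|y\|_1 \|g\|_\infty \;=\; \|g\|_\infty.
\]
It therefore suffices to prove that $\|g\|_\infty \leq 1 + 4\sqrt{n \log n}$ with probability at least $1 - 2/n$.

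Next, I would exploit the symmetry of the sign vector $b$. Because $G$ is circulant, $(0,k) \in E \iff (0, n-k) \in E$, so $b_k = b_{n-k}$ for $k \geq 1$, and only $b_1, \dots, b_{(n-1)/2}$ are free independent Rademacher variables. Pairing conjugate DFT entries yields
\[
g_j = b_0 + 2\sum_{k=1}^{(n-1)/2} b_k \cos(2\pi jk/n) = 1 + 2\sum_{k=1}^{(n-1)/2} b_k \cos(2\pi jk/n),
\]
which exhibits $g_j - 1$ as a sum of $(n-1)/2$ independent, mean-zero terms of magnitude at most $2$. Hoeffding's inequality then gives, for each fixed $j$,
\[
\Pro\!\left(|g_j - 1| \geq t\right) \;\leq\; 2 \exp\!\left(-\frac{t^2}{4(n-1)}\right).
\]

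Choosing $t = 4\sqrt{(n-1)\log n}$ bounds the per-coordinate failure probability by $2/n^4$, and a union bound over $j \in [n]$ yields
\[
\Pro\!\left(\|g\|_\infty \leq 1 + 4\sqrt{n \log n}\right) \;\geq\; 1 - 2/n^3 \;\geq\; 1 - 2/n,
\]
which combined with the first step finishes the proof. There is essentially no obstacle here; the only point of minor care is to apply Hoeffding using the $(n-1)/2$ genuinely independent coordinates of $b$ (rather than treating all $n$ entries as free), so that the constant in front of $\sqrt{n \log n}$ matches the claim.
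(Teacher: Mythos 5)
Your proposal is correct and follows essentially the same route as the paper: reduce to bounding $\norm{g}_\infty$ via the LP of \Cref{lem:lp_with_g} and the normalization $\norm{y}_1 = 1$, then apply Hoeffding's inequality to the $(n-1)/2$ genuinely independent Rademacher coordinates of $b$ coordinate-wise, and finish with a union bound over $j \in [n]$. The only (immaterial) difference is bookkeeping of constants — you keep the factor $2$ inside the Hoeffding sum and get a per-coordinate bound of $2/n^4$, while the paper pulls it out and settles for $2/n^2$ — and both comfortably yield the claimed $1 - 2/n$.
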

\begin{proof}
We show that each entry of \(g\) is small with high probability. Indeed, for any \(k \in [n]\),
\begin{equation}
\begin{aligned}
&\Pro(\abs{g_k} > 1 + 4 \sqrt{n \log n}) = \Pro(\abs{\langle f_k, b \rangle} > 1 + 4 \sqrt{n \log n}) \\
&\quad \leq \Pro\left(\abs*{\sum_{j = 1}^{(n - 1) / 2}X_j} > 2 \sqrt{n \log n}\right) \leq \frac{2}{n^2},
\end{aligned}
\end{equation}
where \(X_j \coloneqq \Re(F_{kj})b_j \in [-1, 1]\), and the last step follows from Hoeffding's inequality~(\Cref{lem:hoeffding}).
Applying union bound over \(k \in [n]\), we obtain
\begin{equation}
\label{eq:linf_norm}
\begin{aligned}
    &\Pro(\norm{g}_{\infty} > 1 + 4 \sqrt{n \log n}) \leq \frac{2}{n}.
\end{aligned}
\end{equation}
Thus, on a complement event, for any feasible vector \(y\) of~\Cref{eq:first_step}, we can simply upper bound \(\braket{y, g} \leq \norm{y}_{1} \norm{g}_{\infty} \leq 1 + 4\sqrt{n \log n}\), which finishes the proof.
\end{proof}

The upper bound in~\Cref{thm:main} would follow if we could show \(\max_k g_k = O(\sqrt{n \log \log n})\) with high probability.
However, this is too optimistic: since we expect that the coordinates of \(g\) behave like standard Gaussian random variables and are uncorrelated, 
we also expect that \(\max_k g_k = \Theta(\sqrt{n \log n})\). 
Fortunately, as the next lemma shows, only a vanishing fraction of entries is of order at least \(\sqrt{n \log \log n}\).
\begin{lemma}
\label{lem:few_large}
There exists a constant \(C > 0\), such that for \(\mathcal{I} \coloneqq \{k \in [n]: \abs{g_k} \geq C \sqrt{n\log \log n}\}\), it holds
    \begin{equation}
        \Pro\left(\lvert \mathcal{I} \rvert \leq \frac{n}{\log^{10} n}\right) \geq 1 - \frac{1}{\log^{10} n}.
    \end{equation}
\end{lemma}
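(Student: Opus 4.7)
The strategy is a straightforward first-moment / Markov argument, with a per-coordinate Hoeffding bound doing the work. Since $|\mathcal{I}|$ is a sum of indicators $\mathbf{1}[|g_k| \geq C\sqrt{n\log\log n}]$ over $k \in [n]$, it suffices to show that each indicator has expectation at most $(\log n)^{-20}$: Markov then yields
\[
\Pr\bigl(|\mathcal{I}| \geq n (\log n)^{-10}\bigr) \leq \frac{\E|\mathcal{I}|}{n(\log n)^{-10}} \leq (\log n)^{-10}.
\]

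For the per-coordinate bound, I would first use the symmetry $b_j = b_{n-j}$ (and $b_0 = 1$) to write
\[
g_k = \langle f_k, b \rangle = 1 + 2 \sum_{j=1}^{(n-1)/2} \cos\!\tfrac{2\pi j k}{n}\, b_j,
\]
which is a sum of $(n-1)/2$ independent Rademacher variables with coefficients in $[-2,2]$. Using $\sum_{j=0}^{n-1} \cos^2(2\pi jk/n) = n/2$ for $k \neq 0$ gives $\sum_{j=1}^{(n-1)/2} (2\cos(2\pi jk/n))^2 = n - 2$. Hoeffding's inequality (\Cref{lem:hoeffding}) then yields
\[
\Pr\bigl(|g_k| \geq C\sqrt{n\log\log n}\bigr) \leq 2 \exp\!\left(-\frac{C^2 n \log \log n}{2(n-2)}\right) \leq 2 (\log n)^{-C^2/2},
\]
which is at most $(\log n)^{-20}$ once $C$ is chosen sufficiently large (any $C > \sqrt{40}$ works for all large enough $n$, absorbing the factor $2$). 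The single exceptional coordinate $k = 0$ (where the variance scale is $n$ rather than $n-2$) contributes at most $1$ to $|\mathcal{I}|$ and hence does not affect the bound.

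There is no serious obstacle here; the main care is in the bookkeeping of constants and in correctly identifying that $g_k$ is a real random variable with subgaussian parameter $\Theta(n)$ so that Hoeffding produces exactly the Gaussian-like tail $\exp(-t^2/\Theta(n))$. The lemma is essentially tight: the coordinates of $g$ behave like $\sqrt{n}$ times standard Gaussians, so the expected number of entries exceeding $C\sqrt{n\log\log n}$ is $\Theta(n \cdot (\log n)^{-C^2/2})$, and choosing $C$ large enough makes this polynomially small in $\log n$, as required.
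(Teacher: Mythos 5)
Your proof is correct and follows essentially the same route as the paper: a per-coordinate Hoeffding bound showing $\Pro(\abs{g_k} \geq C\sqrt{n\log\log n}) \leq (\log n)^{-20}$, followed by Markov's inequality applied to $\E\abs{\mathcal{I}}$. The extra detail you supply (writing $g_k$ as a real Rademacher sum via the symmetry $b_j = b_{n-j}$, and handling $k=0$ separately) is a useful elaboration of what the paper leaves implicit, but does not change the argument.
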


\begin{proof}
    We express \(\abs{\mathcal{I}} = \sum_{k=0}^{n - 1} Y_k\), where \(Y_k = \mathbb{I}\{ \abs{g_k} \geq C \sqrt{n \log \log n}\}\). 
    Using Hoeffding's inequality we obtain, for \(C\) large enough,
    \begin{equation}
    \begin{aligned}
        \E \lvert \mathcal{I} \rvert &= \sum_{k = 0}^{n - 1} \Pro(\abs{g_k} \geq C \sqrt{n\log \log n}) \leq \frac{n}{\log^{20} n}, \\
    \end{aligned}
    \end{equation}
    where the constant on the right hand side is absorbed into logarithm, and its power is chosen for the technical reasons.
    Plugging this bound into Markov's inequality we get
    \begin{equation}
    \Pro\left(\lvert \mathcal{I} \rvert \geq \frac{n}{\log^{10} n}\right) \leq \frac{1}{\log^{10} n}.
    \end{equation}
\end{proof}

The constraint \(y \in \ker \wt F\) was so far only used 
to change the objective function from \(n y_0\) to \(\langle y, g \rangle\). 
Next lemma highlights another important consequence of this constraint,
namely, an upper bound on the \(\norm{y}_2\).
\begin{lemma}
\label{lem:rip}
    For large enough $n$, with probability at least \(1 - \frac{1}{n}\) all \(x \in \ker \wt F\) satisfy \(\norm{x}_2 \leq \frac{\log^2 n}{\sqrt{n}} \norm{x}_1\).
\end{lemma}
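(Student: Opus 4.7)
The plan is to combine the restricted isometry property for subsampled DFT matrices (\Cref{lem:rip_dft}) with the standard null-space property argument from compressed sensing. The number $m$ of rows of $\wt F$ is the degree of vertex $0$ in $G$; by construction it equals twice a $\mathrm{Binomial}((n-1)/2, 1/2)$ random variable, so a Chernoff bound gives $m \geq n/3$ with probability at least $1 - e^{-\Omega(n)}$. Conditional on this and on the selected rows, \Cref{lem:rip_dft} applied to the normalized matrix $\sqrt{n/m}\,\wt F$ yields the $(s, 1/2)$-restricted isometry property for some $s = \Omega(n/\log^{c} n)$ with $c \leq 4$, with failure probability at most $n^{-1}$.

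Given the RIP, I fix any $x \in \ker \wt F$, reindex coordinates so that $\abs{x_0} \geq \abs{x_1} \geq \cdots$, and partition $[n]$ into consecutive blocks $S_0, S_1, \ldots$ of size $s$ (the last possibly shorter). Since $\wt F x = 0$ and each $x_{S_j}$ is $s$-sparse, the RIP together with the triangle inequality gives
\begin{equation*}
\tfrac{1}{\sqrt{2}}\,\norm{x_{S_0}}_2 \;\leq\; \sqrt{n/m}\,\norm{\wt F x_{S_0}}_2 \;=\; \sqrt{n/m}\,\Bigl\lVert \wt F \sum_{i \geq 1} x_{S_i} \Bigr\rVert_2 \;\leq\; \sqrt{3/2}\,\sum_{i \geq 1}\norm{x_{S_i}}_2.
\end{equation*}
The decreasing ordering forces $\norm{x_{S_i}}_\infty \leq \norm{x_{S_{i-1}}}_1/s$ for $i \geq 1$, hence $\norm{x_{S_i}}_2 \leq \sqrt{s}\,\norm{x_{S_i}}_\infty \leq \norm{x_{S_{i-1}}}_1/\sqrt{s}$, and telescoping yields $\sum_{i \geq 1}\norm{x_{S_i}}_2 \leq \norm{x}_1/\sqrt{s}$. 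Combining with the previous display gives $\norm{x_{S_0}}_2 \leq \sqrt{3}\,\norm{x}_1/\sqrt{s}$, and using $\norm{x}_2^2 \leq \norm{x_{S_0}}_2^2 + \bigl(\sum_{i \geq 1}\norm{x_{S_i}}_2\bigr)^2$ then gives $\norm{x}_2^2 = O(\log^{c} n/n)\,\norm{x}_1^2$, which for $c \leq 4$ and $n$ large enough is bounded by $(\log^2 n/\sqrt{n})^2\,\norm{x}_1^2$.

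The main obstacle is invoking \Cref{lem:rip_dft} in the correct probabilistic model. The Haviv--Regev result is stated for a uniformly random row-subset of prescribed size, whereas $\wt F$ here is Bernoulli-selected with the additional symmetry that $f_k$ is included if and only if $f_{n-k}$ is (since the adjacency is symmetric and $f_{n-k}$ is the complex conjugate of $f_k$). Conditioning on the value of $m$ and on the symmetric pattern should reduce to the standard Haviv--Regev setting at a modest probability cost, but one must verify that the resulting polylog overhead in $s$ is genuinely at most $\log^4 n$, so that the final bound comes out to the target $\log^2 n$ factor rather than a larger power.
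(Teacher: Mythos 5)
Your block-decomposition argument in the middle is a correct (re)derivation of the null space property from RIP --- it is essentially a proof of \Cref{lem:rip_l2l1}, which the paper simply cites --- and your degree estimate for $m$ is fine. The problem is the step you yourself flag at the end, and it is not a verification detail but the crux of the lemma. The rows of $\wt F$ are \emph{not} a uniformly random subset of rows of $F$, even after conditioning on their number: they form a uniformly random \emph{symmetric} subset (closed under $k \mapsto n-k$). A change of measure from the symmetric model to the uniform $m$-subset model of Haviv--Regev costs a factor of order $\binom{n-1}{m}\big/\binom{(n-1)/2}{m/2} = 2^{\Theta(n)}$, while \Cref{lem:rip_dft} only supplies failure probability $2^{-\Omega(\log^2 n)}$; so ``conditioning on the symmetric pattern at a modest probability cost'' does not go through, and the polylog overhead you hope to bound is actually an exponential one.

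The missing idea, which is how the paper closes this gap, is a monotone coupling rather than a change of measure. Take $\hat b_1,\dots,\hat b_{n-1}$ i.i.d.\ $\mathrm{Ber}\bigl(\tfrac{\sqrt2-1}{\sqrt2}\bigr)$ and set $\wt b_k = 1$ iff $\hat b_k = 1$ or $\hat b_{n-k}=1$; the parameter is chosen so that $\Pro(\wt b_k = 1) = 1 - \tfrac12 = \tfrac12$, so $\wt b$ has exactly the law of the adjacency vector of vertex $0$, while the rows selected by $\hat b$ are genuinely i.i.d.\ Bernoulli and always form a \emph{subset} of the rows of $\wt F$. Since deleting rows only enlarges the kernel, $\ker\wt F \subseteq \ker\wh F$, and it suffices to prove the $\ell_2$--$\ell_1$ bound for the smaller matrix $\wh F$, whose row set (conditioned on its cardinality $q \ge \lceil n/4\rceil$, and on excluding row $0$ at constant probability cost) is a bona fide uniform $q$-subset to which \Cref{lem:rip_dft} applies. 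Without this reduction --- or some other argument establishing RIP directly for the symmetric ensemble --- your proof is incomplete.
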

\begin{proof}
    We adapt the existing results in the literature regarding the RIP of the subsampled Fourier basis.
    
    Consider the following coupling: let \(\hat b \in \{0, 1\}^n\) with \(\hat b_0 = 0\) and \(\hat b_k \overset{\mathrm{iid}}{\sim} \mathrm{Ber}\left(\frac{\sqrt{2} - 1}{\sqrt{2}}\right)\) for \(k = 1, \ldots, n - 1\). Let \(\wt b \in \{0, 1\}^n\) be defined as follows:
    \begin{equation}
        \wt b_k = \begin{cases}
            0, \quad &\text{for }k = 0, \\
            1, \quad &\text{if }\hat b_k = 1 \text{ or } \hat b_{n - k} = 1, \text{ for } k \geq 1, \\
            0, \quad &\text{otherwise}.
        \end{cases}
    \end{equation}
    Note that (i) the distribution of \(\wt b\) is the same as the distribution of the adjacency vector for the vertex \(0\) in the random circulant graph \(G\) and (ii) \(\wt b_i = 0\) implies \(\hat b_i = 0\). 
    Let \(q \coloneqq \sum_k \hat b_k\). 
    Define \(\wh F \in \C^{q \times n}\) to be the matrix consisting of subsampled rows of \(F\) rescaled by \(1 / \sqrt{q}\),
    where the \(k\)-th row is included if and only if \(\hat b_k = 1\).

    To show that \(\wh F\) satisfies the RIP, we apply~\Cref{lem:rip_dft}. To ensure its requirements, we condition on the following two events. First, since we do not include row $0$ in our construction, we condition on the event that among the uniformly subsampled rows, row \(0\) is not present; this increases the probability of a bad event by at most a constant factor. Additionally, we condition on a high probability event that \(q \geq \lceil n / 4 \rceil\). \Cref{lem:rip_dft} then implies that there exist constants \(c > 0\) and \(0 < \varepsilon < 1/3\), such that with probability at least \(1 - 1/n\), 
    \(\wh F\) satisfies the RIP with parameters \(k = \frac{c n}{\log^3 n}\) and \(\varepsilon\). 
    
    On this event, by~\Cref{lem:rip_l2l1}, it follows that
    \begin{equation}
         \norm{x}_2 \leq \frac{C(\e)\log^{3/2} n}{\sqrt{cn}} \norm{x}_1 \leq \frac{\log^2 n}{\sqrt{n}} \norm{x}_1,
    \end{equation}
    for all \(x \in \ker \wh F\) and large enough $n$, where we absorbed the constants in the additional $(\log n)^{1/2}$ factor in the numerator. Since $\wh F$ consists of a subset of rows of $\wt F$, all \(x \in \ker \wt F\) are also in \(\ker \wh F\), so the proof is complete. 
\end{proof}

\begin{remark}[Alternative proof technique]
\Cref{lem:rip} also follows from an intermediate step in the proof of RIP of the subsampled Fourier matrix in~\cite{haviv2017restricted}. More specifically, in our notation \cite[Theorem~3.1]{haviv2017restricted} implies that $\|\wh F x\| \ge (1 - \varepsilon) \| F x\|^2_2 - C \varepsilon / k \| x \|_1^2$ with high probability, and since $x \in \ker \wh F$, it follows that $\|x\|_2 \le \frac{ \log^{2}n}{\sqrt{n}}$.
\end{remark}

Now we present the proof of our main result.
\begin{proof}[Proof of~\Cref{thm:main}]
We begin with the lower bound \(\E \ltn(G) \geq \sqrt{n}\). Since \(G\) is vertex-transitive, it holds that \(\ltn(G) \ltn(\overline{G}) = n\), see~\cite[Theorem 8]{lovasz1979shannon}. Therefore,
\begin{equation}
    \log n = \E \log \ltn(G)\ltn(\overline{G}) = 2 \E \log \ltn(G) \leq 2 \log \E \ltn(G),
\end{equation}
where we used the fact that \(G\) equals in distribution to \(\overline{G}\) together with Jensen's inequality and linearity of the expected value. Upon exponentiating we obtain
\begin{equation}
    \E \ltn(G) \geq \sqrt{n}.
\end{equation}

    To prove the upper bound, we use the LP formulation of the Lovász number as in~\Cref{lem:lp_with_g}. Let \(A\) denote the intersection of the events of~\Cref{lem:nlogn_ub,lem:rip}, with \(\Pro(A) \geq 1 - \frac{3}{n}\) from union bound, and let \(B\) denote the event of~\Cref{lem:few_large}. Since \(\E [\ltn \lvert \overline{A} \text{ or } \overline{B}] \Pro(\overline{A} \text{ or } \overline{B}) = O(1)\), we condition on \(A\) and \(B\) in the following. 
    For constant \(C\) defined in~\Cref{lem:few_large}, we split \(g\) into two parts, \(g_{\mathrm{small}}\) and \(g_{\mathrm{large}}\), where 
    \begin{equation}
    (g_{\mathrm{small}})_k = 
    \begin{cases} 
        g_k&\text{ if } \abs{g_k} < C \sqrt{n \log \log n}, \\
        0 &\text{ otherwise},
    \end{cases}
    \end{equation}
    and \(g_{\mathrm{large}} = g - g_{\mathrm{small}}\).
    Then, \(\braket{y, g} = \braket{y, g_{\mathrm{small}}} + \braket{y, g_{\mathrm{large}}}\). 
    We bound each term separately: first,
    \begin{equation}
        \braket{y, g_{\mathrm{small}}} \leq \norm{y}_1  \norm{g_{\mathrm{small}}}_{\infty} = O(\sqrt{n \log \log n}).
    \end{equation}
    On the event \(B\) we have that \(g_{\mathrm{large}}\) is \(\frac{n}{\log^{10} n}\)-sparse. 
    From~\Cref{eq:linf_norm} \(\norm{g_{\mathrm{large}}}_{\infty} = O(\sqrt{n \log n })\), which implies that \(\norm{g_{\mathrm{large}}}_2 = O(n / \log^4 n)\). Using Cauchy-Schwartz inequality together with~\Cref{lem:rip}, we bound the second term as follows:
    \begin{equation}
        \braket{y, g_{\mathrm{large}}} \leq \norm{y}_2 \norm{g_{\mathrm{large}}}_{2} \leq \frac{\log^2 n }{\sqrt{n}} \cdot \frac{n}{\log^4 n}= O(\sqrt{n}),
    \end{equation}
    which completes the proof.
\end{proof}
\section{Discussion}
\label{sec:discussion}
Based on numerical observations, we formulate the following conjecture.

\begin{conjecture}
\label{conj:sharp}
    Let \(G\) be a dense random circulant graph. Then, 
    \begin{equation}
        \E \ltn(G) = (1 + o(1)) \sqrt{n}.
    \end{equation}
\end{conjecture}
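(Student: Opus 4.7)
The lower bound $\E \ltn(G) \geq \sqrt{n}$ is already established in the proof of \Cref{thm:main} via the identity $\ltn(G)\ltn(\overline{G}) = n$ for vertex-transitive graphs combined with Jensen's inequality, so it remains only to sharpen the upper bound of \Cref{thm:main} from $C\sqrt{n \log \log n}$ to $(1 + o(1)) \sqrt{n}$. The plan is to prove $\ltn(G) \leq (1+o(1))\sqrt{n}$ with high probability and then absorb the bad event into expectation as in the proof of \Cref{thm:main}.

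The cleanest route, and the one I would pursue first, goes through Cauchy--Schwarz. Since $g = Fb$ with $b \in \{\pm 1\}^n$ and $F^*F = nI$, we have $\norm{g}_2 = n$; if one could show that every feasible $y$ in \Cref{lem:lp_with_g} satisfies $\norm{y}_2 \leq (1 + o(1))/\sqrt{n}$ with high probability, then
\begin{equation}
\braket{y, g} \leq \norm{y}_2 \norm{g}_2 \leq (1 + o(1)) \sqrt{n},
\end{equation}
which finishes the proof. Note that this target is essentially tight from below: the uniform vector $y = \mathbf{1}/n$ is always feasible and attains $\norm{y}_2 = 1/\sqrt{n}$, so the bound says that positivity together with the random kernel constraint force every feasible $y$ to be near-uniform in $\ell_2$.

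To prove the strengthened $\ell_2$ bound, I would exploit the positivity constraint $y \geq \mathbf{0}$, which was not used in the proof of \Cref{lem:rip}. Viewing $y$ as a probability distribution on $[n]$ whose discrete Fourier transform vanishes on roughly half of $[n]\setminus\{0\}$ (the random edge set), this reduces to a Fourier uncertainty question for nonnegative measures: such a distribution should be forced to spread its mass almost uniformly. An alternative is to work with the dual LP from \Cref{table:4_lps}: one constructs a feasible dual variable $t$ with $t_0 \approx (\sqrt{n} - 1)/n$ by fixing $t_k = -1/n$ on non-edges (as mandated), $t_0$ small, and using the $\approx n/2$ free edge-coordinates to correct the inverse DFT of $-\mathbf{1}_{E(\overline G)}/n$ into a nonnegative vector while perturbing $t_0$ by only $O(1/\sqrt{n})$.

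The main obstacle is the final constant-factor improvement from $\log^2(n)/\sqrt{n}$ to $(1 + o(1))/\sqrt{n}$. Existing RIP-type tools are insensitive to the sign pattern of $y$, while classical uncertainty principles only supply the matching lower bound $\norm{y}_2 \geq c/\sqrt{n}$ rather than a tight upper bound. Both the primal route (sharp uncertainty for nonnegative measures with random Fourier support) and the dual route (explicit construction of a near-optimal dual certificate against a random linear constraint) appear to require a genuinely new ingredient beyond the techniques in this paper, consistent with the authors' remark in \Cref{sec:discussion} that the positivity constraint is essential for tighter results.
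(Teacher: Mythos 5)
This statement is a \emph{conjecture}: the paper offers no proof of it and explicitly leaves it for future work, noting only numerical evidence and some obstructions in \Cref{sec:discussion}. Your proposal is, by your own admission, not a proof either --- it correctly recycles the paper's lower bound argument (the identity $\ltn(G)\ltn(\overline{G}) = n$ plus Jensen), but for the upper bound it only sketches two candidate strategies and concedes that both ``appear to require a genuinely new ingredient.'' So the honest assessment is that nothing is established beyond what \Cref{thm:main} already gives; the entire content of the conjecture, namely the sharp constant in the upper bound, remains open in your write-up exactly as it does in the paper.

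One concrete warning about your ``cleanest route.'' To get $(1+o(1))\sqrt{n}$ from Cauchy--Schwarz with $\norm{g}_2 = n$ you would need \emph{every} feasible $y$ (or at least every optimal one) to satisfy $\norm{y}_2 \leq (1+o(1))/\sqrt{n}$, i.e.\ to be $\ell_2$-indistinguishable from the uniform vector $\mathbf{1}/n$. But the LP optimum of \Cref{lem:lp_with_g} is attained at a vertex of the feasible polytope, and a vertex is a basic feasible solution whose support size is at most the number of active equality constraints, which is a constant fraction $\alpha n$ of $n$ with $\alpha < 1$. Any such vertex has $\norm{y}_2^2 \geq \norm{y}_1^2 / \lvert \supp(y)\rvert \geq 1/(\alpha n)$, so $\norm{y}_2 \geq (1/\sqrt{\alpha})\cdot n^{-1/2}$ with $1/\sqrt{\alpha} > 1$ bounded away from $1$. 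Hence Cauchy--Schwarz at the optimizer can never certify a constant better than $1/\sqrt{\alpha}$, and this route is structurally incapable of yielding the conjectured $(1+o(1))\sqrt{n}$ even granting a best-possible positivity-aware uncertainty principle. The dual-certificate route you mention second is not subject to this obstruction and seems the more plausible path, but as written it is only a one-sentence outline with no construction or estimate, so there is nothing there to verify.
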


Existing lower bounds against RIP (see~\cite{bandeira2018discrete,blasiok2019improved}) do not allow us to use our proof strategy for showing~\Cref{conj:sharp}. Indeed, there exist \(\frac{n}{\log n}\)-sparse vectors in the kernel of \(\wt F\), which contradicts the desired inequality \(\norm{y}_2 \leq \frac{C}{\sqrt{n}} \norm{y}_1\) for \(y \in \ker \wt F\). However, it is still possible that no \(c n\)-sparse \emph{entrywise positive} vector exists in the kernel of \(\wt F\), for small enough constant \(c > 0\). It is also plausible that constructing a feasible vector for the dual programs in~\Cref{table:4_lps} may lead to tighter upper bounds. We leave these questions for the future work.

\paragraph*{Paley graph}
A classical example of a circulant graph is \textit{Paley} graph. For a prime \(p \equiv 1 \mod 4\), it is defined as the graph on \(p\) vertices with vertices \(i\) and \(j\) connected if and only if \(i - j\) is a quadratic residue modulo \(p\), see~\cite{cohen1988clique, baker1996maximal}. Paley graphs are believed to exhibit certain \textit{pseudorandom} properties, and bounding its independence number is a long-standing open problem in number theory and combinatorics~\cite{hanson2021refined}. This quantity can be upper bounded by the Lovász number of a certain subgraph called 1-localization which is circulant \cite{kunisky2024spectral}.

Recently, several optimization based approaches were considered, see~\cite{kobzar2023revisiting,kunisky2024spectral,wanglower}. In~\cite{magsino2019linear}, a numerical evidence similar to~\Cref{conj:sharp} regarding subgraphs of Paley graph was observed, which if true, recovers the best known upper bound on the independence number due to~\cite{hanson2021refined}.

\section{Useful inequalities}

\begin{lemma}[Hoeffding's inequality]
\label{lem:hoeffding}
    Let \(X_1, \ldots, X_n\) be independent random variables, such that \(\E X_i = 0\) and \(a \leq X_i \leq b\) almost surely. Then,
    \begin{equation}
        \Pro\left(\abs*{\sum_{i=1}^n X_i} \geq t\right) \leq 2\exp\left( - \frac{2 t^2}{n (b - a)^2}\right)
    \end{equation}
\end{lemma}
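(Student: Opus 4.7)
The plan is to establish the one-sided tail bound \(\Pro(\sum_i X_i \ge t) \le \exp(-2t^2/(n(b-a)^2))\) via the Cramér--Chernoff method, and then obtain the two-sided version by applying the same argument to \(-X_i\) together with a union bound (which produces the factor of \(2\) in the statement).

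First I would write, for any \(\lambda > 0\), the exponential Markov inequality
\begin{equation*}
\Pro\left(\sum_{i=1}^n X_i \ge t\right) \;=\; \Pro\bigl(e^{\lambda \sum_i X_i} \ge e^{\lambda t}\bigr) \;\le\; e^{-\lambda t}\, \E\Bigl[e^{\lambda \sum_i X_i}\Bigr],
\end{equation*}
and then use independence to factor the moment generating function as \(\E\bigl[e^{\lambda \sum_i X_i}\bigr] = \prod_{i=1}^n \E\bigl[e^{\lambda X_i}\bigr]\). The entire problem then reduces to producing a sharp bound on the MGF of a single centered, bounded random variable.

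The main technical step, and what I view as the heart of the proof, is Hoeffding's lemma: if \(\E X = 0\) and \(a \le X \le b\) almost surely, then \(\E[e^{\lambda X}] \le \exp\bigl(\lambda^2 (b-a)^2 / 8\bigr)\). I would prove this by using convexity of \(x \mapsto e^{\lambda x}\) to write
\begin{equation*}
e^{\lambda X} \;\le\; \frac{b-X}{b-a}\, e^{\lambda a} + \frac{X-a}{b-a}\, e^{\lambda b},
\end{equation*}
taking expectations (the linear term in \(X\) vanishes since \(\E X = 0\)), and then setting \(\phi(\lambda) \coloneqq \log\bigl(\frac{b}{b-a}e^{\lambda a} - \frac{a}{b-a}e^{\lambda b}\bigr)\). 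A direct computation yields \(\phi(0) = \phi'(0) = 0\) and \(\phi''(\lambda) \le (b-a)^2/4\) uniformly (the second derivative equals \(p(1-p)(b-a)^2\) for some \(p \in [0,1]\) depending on \(\lambda\), which is maximized at \(p = 1/2\)). A second-order Taylor expansion then gives \(\phi(\lambda) \le \lambda^2 (b-a)^2 / 8\).

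Combining these ingredients produces
\begin{equation*}
\Pro\left(\sum_{i=1}^n X_i \ge t\right) \;\le\; \exp\!\left(-\lambda t + \frac{n \lambda^2 (b-a)^2}{8}\right),
\end{equation*}
and optimizing the quadratic in \(\lambda\) by choosing \(\lambda = 4t / (n(b-a)^2)\) yields \(\exp\bigl(-2t^2/(n(b-a)^2)\bigr)\). Applying the identical argument to the variables \(-X_i\) (which are also centered and take values in \([-b,-a]\), an interval of the same length) and taking a union bound over the two one-sided events gives the stated factor of \(2\). The only delicate step is the curvature estimate \(\phi'' \le (b-a)^2/4\); the rest is routine Chernoff bookkeeping.
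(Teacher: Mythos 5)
Your proof is correct and complete: it is the standard Chernoff--Hoeffding argument (exponential Markov, Hoeffding's lemma via convexity and the curvature bound \(\phi'' \le (b-a)^2/4\), optimization at \(\lambda = 4t/(n(b-a)^2)\), and a union bound for the two-sided statement), and all the constants check out. The paper states this lemma as a standard reference result without proof, so there is no alternative argument to compare against.
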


\begin{lemma}[RIP of subsampled DFT matrix,~\cite{haviv2017restricted}]
\label{lem:rip_dft}
Let \(F \in \C^{n \times n}\) be a DFT matrix: \(F_{jk} = \exp(-2 \pi i jk/n)\) for \(j, k \in [n]\). There exist \(c > 0\) and \(0 < \varepsilon < 1 / 3\), such that for all \(n\) large enough, a matrix consisting of \(q \geq \lceil n / 4 \rceil\) uniformly subsampled rows of \(F\) and rescaled by \(1 / \sqrt{q}\) has \((k, \varepsilon)\)-RIP for \(k = \frac{c n}{\log^3 n}\), with probability at least \(1 - 2^{\Omega(-\log^2 n)}\).
\end{lemma}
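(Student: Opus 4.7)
The plan is to establish that the random submatrix $A = \frac{1}{\sqrt{q}} F_S$ (with $S \subset [n]$ a uniform random subset of size $q$) satisfies the restricted isometry property by bounding the restricted isometry constant
\[
\delta_k(A) \defeq \sup_{\substack{T \subset [n] \\ |T| \leq k}} \norm{A_T^* A_T - I_T}_{\mathrm{op}},
\]
where $A_T$ denotes the restriction of $A$ to columns indexed by $T$. Since each row $f_i/\sqrt{q}$ of $A$ is drawn (essentially) i.i.d.\ from a uniformly normalized Fourier row, one has $\E[A_T^* A_T] = I_T$ for every fixed $T$, so the question reduces to a concentration-of-measure argument for a random sum of rank-one matrices, followed by a union bound over supports $T$.

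The first step is to handle a \emph{fixed} support $T$ of size $k$. Write
\[
A_T^* A_T - I_T = \tfrac{1}{q} \sum_{i \in S} \bigl(u_i u_i^* - I_T\bigr),
\]
where $u_i \in \C^k$ is the restriction of $f_i$ to coordinates in $T$. Because $|F_{ij}|=1$, we have $\norm{u_i}_2 = \sqrt{k}$, so each summand has operator norm at most $k+1$ and controlled variance. The noncommutative Bernstein inequality then yields, for each fixed $T$, a tail bound of the form $\exp(-c\, q\varepsilon^2 / k)$ on the event $\norm{A_T^* A_T - I_T}_{\mathrm{op}} > \varepsilon$. Taking a union bound over the $\binom{n}{k} \leq (en/k)^k$ possible supports forces the requirement $q\varepsilon^2/k \gtrsim k \log(n/k)$, which only permits $k$ of order roughly $\sqrt{q}/\log n$; this naive analysis is therefore far too weak to reach $k = \Theta(n / \log^3 n)$.

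Closing the gap is the hard part of the lemma. The standard route is a generic-chaining / Dudley-integral argument applied to the empirical process $x \mapsto \braket{x, (A^* A - I)\, x}$ indexed by $k$-sparse unit vectors, controlling covering numbers of the $k$-sparse unit $\ell_2$-ball in the metric induced by the subsampled Fourier rows. Rudelson--Vershynin's classical treatment via this route yields only $k = \Omega(n/\log^4 n)$, and the further saving of one logarithmic factor — the contribution of~\cite{haviv2017restricted} — requires a refined decoupling and entropy argument that exploits both the bounded modulus of Fourier entries and the group structure of $\Z_n$. I would expect to invest essentially all of the effort here: reproducing the Bernstein step and the union bound is routine, but the sharp $\log^3 n$ scaling cannot be reached without this more delicate chaining / moment-method analysis, which I would import rather than attempt to re-derive from scratch.
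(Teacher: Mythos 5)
The paper gives no proof of this lemma: it is imported in simplified form directly from~\cite{haviv2017restricted}, and your proposal ultimately does the same, after correctly diagnosing that the elementary Bernstein-plus-union-bound route only reaches sparsity $k = O(\sqrt{q/\log n})$ and that the stated $k = \Theta(n/\log^3 n)$ requires the refined chaining/entropy analysis of that reference. Your sketch of the landscape (the naive bound, the Rudelson--Vershynin $\log^4$ result, and the extra logarithmic saving of Haviv--Regev) is accurate, so deferring to the citation is exactly what the paper does.
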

\begin{lemma}[e.g.~\cite{cahill2021robust}, Theorem 11]
\label{lem:rip_l2l1}
    If \(A \in \C^{m \times n}\) satisfies the RIP with parameters \(k\) and \(\varepsilon < 1/3\), then there exists \(C = C(\varepsilon)\), such that for any \(x \in \ker{A}\) we have that 
    \begin{equation}
        \norm{x}_2 \leq \frac{C}{\sqrt{k}}\norm{x}_1.
    \end{equation}
\end{lemma}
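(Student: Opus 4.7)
The plan is to invoke the standard compressed-sensing ``shelling'' argument that underlies most $\ell_1$-recovery guarantees from the RIP: sort the entries of $x$ by magnitude, partition the index set into consecutive blocks of size $k$, bound the tail blocks using the sorted order, and control the head block using the RIP together with $Ax=0$.

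First, I would assume without loss of generality that the coordinates of $x$ are reordered so that $|x_1|\ge|x_2|\ge\cdots\ge|x_n|$, and set $T_j \coloneqq \{jk+1,\dots,(j+1)k\}$ for $j\ge 0$, so that $T_0$ indexes the $k$ largest magnitudes. Because every entry in $T_{j+1}$ has magnitude at most the minimum entry in $T_j$, which is itself at most $\norm{x_{T_j}}_1/k$, one gets $\norm{x_{T_{j+1}}}_\infty \le \norm{x_{T_j}}_1/k$ and therefore $\norm{x_{T_{j+1}}}_2 \le \sqrt{k}\,\norm{x_{T_{j+1}}}_\infty \le \norm{x_{T_j}}_1/\sqrt{k}$. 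Summing over $j\ge 1$ and using $\sqrt{\sum_j a_j^2}\le\sum_j a_j$ for nonnegative $a_j$ yields
\begin{equation}
\norm{x_{T_0^c}}_2 \,\le\, \sum_{j\ge 1}\norm{x_{T_j}}_2 \,\le\, \frac{1}{\sqrt{k}}\sum_{j\ge 0}\norm{x_{T_j}}_1 \,=\, \frac{\norm{x}_1}{\sqrt{k}}.
\end{equation}

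Next I would exploit $x\in\ker A$ by writing $Ax_{T_0} = -\sum_{j\ge 1} Ax_{T_j}$. Applying the RIP upper bound termwise gives $\norm{Ax_{T_j}}_2 \le \sqrt{1+\varepsilon}\,\norm{x_{T_j}}_2$, and combining this with the same block-sum argument as above produces
\begin{equation}
\norm{Ax_{T_0}}_2 \,\le\, \sqrt{1+\varepsilon}\sum_{j\ge 1}\norm{x_{T_j}}_2 \,\le\, \sqrt{1+\varepsilon}\,\frac{\norm{x}_1}{\sqrt{k}}.
\end{equation}
Since $|T_0|=k$, the RIP lower bound gives $\norm{x_{T_0}}_2 \le \norm{Ax_{T_0}}_2/\sqrt{1-\varepsilon}$, and then
\begin{equation}
\norm{x}_2 \,\le\, \norm{x_{T_0}}_2 + \norm{x_{T_0^c}}_2 \,\le\, \Bigl(1+\sqrt{\tfrac{1+\varepsilon}{1-\varepsilon}}\Bigr)\frac{\norm{x}_1}{\sqrt{k}},
\end{equation}
which is the claimed bound with $C(\varepsilon)=1+\sqrt{(1+\varepsilon)/(1-\varepsilon)}$; this is finite precisely under the assumption $\varepsilon<1$, and well within the hypothesis $\varepsilon<1/3$.

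There is no genuine obstacle here: the argument is entirely self-contained given the RIP. The only point worth emphasizing is that, unlike the more familiar Cand\`es--Tao style $\ell_1$-recovery proofs, this argument uses the RIP only at sparsity $k$ rather than at $2k$, because the cross terms $\langle Ax_{T_0}, Ax_{T_j}\rangle$ are never opened via polarization; the triangle inequality is instead applied directly on $\norm{Ax_{T_0}}_2$. Consequently the sparsity level $k=cn/\log^3 n$ delivered by \Cref{lem:rip_dft} feeds into \Cref{lem:rip} without any loss of a factor of two.
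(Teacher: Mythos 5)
Your proof is correct. Note that the paper does not prove this lemma at all --- it is stated with a citation to \cite{cahill2021robust}, Theorem~11 --- so there is no in-paper argument to compare against; what you have written is the standard self-contained ``shelling'' proof, and every step checks out: the sorted-block estimate $\norm{x_{T_{j+1}}}_2 \le \norm{x_{T_j}}_1/\sqrt{k}$, the use of $Ax_{T_0} = -\sum_{j\ge 1}Ax_{T_j}$ with the RIP upper bound applied blockwise, and the RIP lower bound on the $k$-sparse head $x_{T_0}$, yielding $C(\varepsilon)=1+\sqrt{(1+\varepsilon)/(1-\varepsilon)}$. Your closing observation is also accurate and worth keeping: because you never polarize cross terms $\langle Ax_{T_0},Ax_{T_j}\rangle$, only RIP at sparsity $k$ (not $2k$) is needed, and the constant is finite for any $\varepsilon<1$, so the hypothesis $\varepsilon<1/3$ is used with room to spare. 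Two trivial points you may wish to make explicit: the final block $T_j$ may have fewer than $k$ elements, which only helps the inequality $\norm{x_{T_j}}_2\le\sqrt{k}\,\norm{x_{T_j}}_\infty$; and since $A$ and $x$ are complex here, one should read all magnitudes as moduli, which changes nothing in the argument.
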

\section*{Acknowledgment}
We thank Filip Kovačević for insightful discussions in the beginning of the project. DD is partly supported by the ETH AI Center and the ETH Foundations of Data Science initiative.
\printbibliography

\end{document}